\documentclass[pdflatex,sn-mathphys-num]{sn-jnl}

\usepackage{graphicx}%
\usepackage{multirow}%
\usepackage{amsmath,amssymb,amsfonts}%
\usepackage{amsthm}%
\usepackage{mathrsfs}%
\usepackage[title]{appendix}%
\usepackage{xcolor}%
\usepackage{textcomp}%
\usepackage{manyfoot}%
\usepackage{booktabs}%
\usepackage{algorithm}%
\usepackage{algorithmicx}%
\usepackage{algpseudocode}%
\usepackage{listings}%

\theoremstyle{thmstyleone}%
\newtheorem{theorem}{Theorem}
\theoremstyle{thmstyletwo}%
\newtheorem{remark}{Remark}%

\theoremstyle{thmstylethree}%
\newtheorem{assumption}{Assumption}%

\begin{document}

\title[New reformulations for 0-1 quadratic programming problem]{New reformulations for 0-1 quadratic programming problem using quadratic nonconvex reformulation techniques and valid inequalities}


\author[1]{\fnm{Cheng} \sur{Lu}}\email{lucheng1983@163.com}

\author[1]{\fnm{Yu} \sur{Fei}}\email{feiyv2003@163.com}

\author*[2]{\fnm{Jing} \sur{Zhou}}\email{zhoujing@zjut.edu.cn}

\author[3]{\fnm{Zhibin} \sur{Deng}}\email{zhibindeng@ucas.edu.cn}

\author[4]{\fnm{Guangtai} \sur{Qu}}\email{guangtaiqu@163.com}

\affil[1]{\orgdiv{School of Economics and Management}, \orgname{North China Electric Power University}, \city{Beijing}, \postcode{102206}, \country{China}}

\affil[2]{\orgdiv{School of Mathematical Sciences}, \orgname{Zhejiang University of Technology}, \street{Liuhe Road}, \city{Hangzhou}, \postcode{310023}, \state{Zhejiang Province}, \country{China}}

\affil[3]{\orgdiv{School of Economics and Management}, \orgname{University of Chinese Academy of Sciences}, \city{Beijing}, \postcode{100190}, \country{China}}

\affil[4]{\orgdiv{School of Management}, \orgname{Beijing Institute of Technology}, \city{Beijing}, \postcode{100081}, \country{China}}


\abstract{It is well-known that the quadratic convex reformulation (QCR) technique can speed up some general-purpose solvers such as CPLEX and Gurobi. Recently, the method of quadratic nonconvex reformulation (QNR) was proposed, which provides an alternative way for accelerating a solver via reformulation technique. This paper proposes several new reformulations for 0-1 quadratic programming problems using the QNR technique. Such a technique provides more flexibility in adding nonconvex quadratic constraints into the problem formulation, so that some valid inequalities, such as the triangle inequalities, can be incorporated into the formulation to tighten the lower bound of the problem. We analyze the effects of the proposed reformulations on the lower bounds implemented in the solver, and propose some methods to maximize the McCormick relaxation bounds of the reformulations. Our numerical experiments compare the proposed reformulations with the existing quadratic convex reformulations, showing the effectiveness of the proposed reformulations on 0-1 quadratic programming problems.}

\keywords{0-1 quadratic programming, reformulation, triangle inequalities}



\maketitle

\section{Introduction}

Consider the binary quadratic programming problem of the following form:
\begin{equation}\label{BQP}\tag{BQP}
	\begin{aligned}
		\min ~&~ \frac{1}{2}x^\top Q x+ c^\top x \\
		\text{s.t.} ~&~ x_i \in \{0,1\},~i=1,\ldots,n,
	\end{aligned}
\end{equation}
where $x$ is a binary vector, $Q$ is a symmetric matrix of order $n$, and $c$ is a vector of linear coefficients. \eqref{BQP} is a well-known NP-hard problem \cite{Garey}, which has wide applications \cite{BQP}. This paper focuses on exact algorithms for solving \eqref{BQP}.

Several tailored solvers, including Biq Mac \cite{Rendl}, BiqCrunch \cite{Krislock,BiqCrunch}, BiqBin \cite{BiqBin}, and FixingBB \cite{Locatelli2024}, have been developed for finding exact solutions of \eqref{BQP}. In addition to these tailored solvers, some general-purpose solvers, such as Gurobi, CPLEX, Baron \cite{Sahinidis,Tawarmalani}, SCIP \cite{Bestuzheva2025,Vigerske}, can also solve this problem exactly. Currently, there is still a significant gap between the performance of these general-purpose solvers and the tailored solvers on solving \eqref{BQP}, in terms of solving efficiency. The purpose of this paper is to propose some new reformulations for \eqref{BQP} to accelerate the general-purpose solvers.

In the literature, the technique of quadratic convex reformulation (QCR) has been widely applied to various mixed-integer quadratic programming problems, which can accelerate a general-purpose solver like Gurobi and CPLEX effectively. To our knowledge, the first convex reformulation for \eqref{BQP} was proposed by \cite{Hammer}, in which the objective function $\frac{1}{2}x^\top Q x+ c^\top x$ is replaced by $\frac{1}{2}x^\top Q x+ c^\top x-\lambda_{\min} \sum_{i=1}^n (x_i^2-x_i)$, where $\lambda_{\min}$ denotes the smallest eigenvalue of $Q$. Leveraging the semidefinite programming technique, Billionnet and Elloumi \cite{Billionnet2007} proposed a new quadratic convex reformulation for \eqref{BQP}, which yields a continuous relaxation bound that can be as tight as the conventional semidefinite relaxation bound of \eqref{BQP}. Billionnet et al. \cite{Billionnet2012,Billionnet2013} further proposed some enhanced quadratic convex reformulations, which introduce some additional variables in extended space. Applying the techniques proposed in \cite{Billionnet2012,Billionnet2013} to \eqref{BQP}, an enhanced quadratic convex reformulation can be derived, which yields  a continuous relaxation bound that is as tight as the SDP+RLT relaxation \cite{Anstreicher}  of \eqref{BQP}. Besides of \eqref{BQP}, the QCR technique has also been applied to various mixed integer optimization problems (for example, see \cite{Billionnet2016,Billionnet2017,Elloumi,Lu2025,Plateau,Wu,Zheng2020,Zheng2014}, we only name a few).

The main idea behind QCR is to derive a reformulation of the original problem that yields a high-quality convex continuous relaxation bound. Thus, the reformulation should be a mixed-integer convex quadratic optimization problem. Such a reformulation technique limits the flexibility of adding nonconvex constraints into the problem formulation. Actually, in modern general-purpose solvers such as Gurobi and SCIP, the continuous relaxation is not the unique choice for computing lower bounds. When solving a mixed-integer quadratic programming problem with nonconvex objective function and nonconvex quadratic constraints using these solvers, the lower bounds computed by the solvers are generally based on the McCormick relaxation \cite{McCormick}. Thus, in order to accelerate the solver, it is possible to derive nonconvex reformulations which provide tighter relaxation bounds than the continuous relaxation bounds of the convex reformulations, while these bounds can be computed as efficiently as the continuous relaxation bounds of the convex reformulations.



Leveraging the new advancements in modern solvers, the quadratic nonconvex reformulation (QNR) technique has been proposed in \cite{QNR}. The main idea behind the QNR technique is to reformulate a mixed-integer quadratic optimization problem to another one, which is not necessarily convex, for the purpose of providing a high quality McCormick relaxation bound. As an alternative reformulation technique, QNR provides more flexibility than QCR, as it can add nonconvex quadratic constraints into the problem formulation.

The QNR technique has been applied to some nonconvex quadratic programming problems with continuous variables \cite{QNR}. However, this technique has not been applied to a problem with integer variables. Based on the QNR technique, this paper proposes some new reformulations for \eqref{BQP}, and investigates their impacts to the McCormick relaxation bounds implemented in Gurobi. Furthermore, the QNR based reformulations are compared with the existing QCR based reformulations. Our results provide some new insights into how to design effective reformulations of mixed-integer quadratic programming problems for the purpose of accelerating a general-purpose solver.

This paper is arranged as follows. Section~\ref{sec2} provides a brief review of the quadratic convex reformulations of \eqref{BQP} that have been proposed in the literature. Section~\ref{sec3} introduces the concept of the quadratic nonconvex reformulation technique, proposes a general framework for tightening the McCormick relaxation bound by incorporating valid quadratic inequalities into the reformulation, and discusses the application of the triangle inequalities in the reformulation. Section~\ref{sec4} reports numerical results on comparing the proposed quadratic nonconvex reformulations with the existing quadratic convex reformulations.

The following notations are used in this paper: $\mathbb{R}^n$ denotes the set of $n$-dimensional real vectors, $\mathbb{R}^n_+$ denotes the set of nonnegative $n$-dimensional real vectors, $\mathbb{S}^n$ denotes the set of $n\times n$ real symmetric matrices, $\mathbb{X}^n$ denotes the set of $n\times n$ symmetric matrices with zero diagonal entries, and  $\mathbb{X}_+^n$ denotes the set of matrices in $\mathbb{X}^n$ with nonnegative entries.
For a vector $\lambda\in\mathbb{R}^n$, we use $\textrm{diag}(\lambda)$ to denote the $n\times n$ diagonal matrix with diagonal elements being equal to the entries of $\lambda$. For two matrices $A$, $B\in\mathbb{S}^n$, define $A\cdot B := \textrm{trace}(AB)$ as the matrix inner product, and $A\succeq B$ means that $A-B$ is positive semidefinite. $e\in\mathbb{R}^n$ denotes the vector with all entries equaling one.

\section{A review of quadratic convex reformulation}\label{sec2}

Before introducing the concept of quadratic nonconvex reformulation, we first review the quadratic convex reformulations of \eqref{BQP} that have been proposed in \cite{Billionnet2007,Billionnet2012,Billionnet2013} respectively. 

The reformulation proposed in \cite{Billionnet2007} is defined as follows:
\begin{equation}\label{QCR}\tag{QCR}
	\begin{aligned}
		\min ~&~ \frac{1}{2}x^\top Q x+ c^\top x+\sum_{i=1}^n\lambda_i (x_i^2-x_i) \\
		\text{s.t.} ~&~ x_i \in \{0,1\},~i=1,\ldots,n,
	\end{aligned}
\end{equation}
where $\lambda:=(\lambda_1,\ldots,\lambda_n)\in\mathbb{R}^n$ such that $Q+2\textrm{diag}(\lambda)\succeq 0$. Note that the objective function in \eqref{QCR} is convex, so that its continuous relaxation is convex and can be solved efficiently. The best parameter $\lambda^\ast$ for \eqref{QCR} is defined as the one that maximizes the continuous relaxation bound of the problem and is computed by solving the following semidefinite programming problem  \cite{Billionnet2007}:
\begin{equation}\label{SDPDual}
	\begin{aligned}
		\max ~&\tau\\
		\mbox{s.t.} ~&\begin{bmatrix}
			-2 \tau ~&~ (c -\lambda)^\top \\
			c -\lambda  ~&~ Q+2\textrm{diag}(\lambda)  \\
		\end{bmatrix}\succeq 0,\\
		~&\tau\in\mathbb{R},~ \lambda\in\mathbb{R}^n.
	\end{aligned}
\end{equation}
Note that the dual problem of \eqref{SDPDual} is given as follows:
\begin{equation}\label{SDP}\tag{SDP}
	\begin{aligned}
		\min ~&~ \frac{1}{2}Q\cdot X+c^\top x\\
		\mbox{s.t.} ~&X_{ii}=x_i,~i=1,\ldots,n\\
		&X\succeq xx^\top,
	\end{aligned}
\end{equation}
which is the conventional semidefinite relaxation of \eqref{BQP}. When using the optimal solution of \eqref{SDPDual} to construct \eqref{QCR}, the continuous relaxation bound of the reformulated problem is equal to the optimal value of \eqref{SDP}.

Another reformulation of \eqref{BQP} is proposed in \cite{Billionnet2012,Billionnet2013}, which introduces some additional variables in an extended space. Actually, the reformulations proposed in \cite{Billionnet2012,Billionnet2013} are derived for general mixed-integer quadratic optimization problems, which include \eqref{BQP} as a special case. Applying these reformulation techniques to \eqref{BQP}, the following reformulation can be derived:
\begin{equation}\label{QCRE}\tag{QCRE}
	\begin{aligned}
		\min ~&~ \frac{1}{2} x^\top (Q+2\textrm{diag}(\lambda)-2Z) x+c^\top x - \lambda^\top x + Z \cdot X \\
		\mbox{s.t.} ~~&(X_{ij},x_i,x_j)\in \mathcal{M}_{ij},~i,j=1,\ldots,n,~i\neq j,\\
		&X\in \mathbb{X}^n,~x\in\{0,1\}^n,
	\end{aligned}
\end{equation}
where $\lambda\in\mathbb{R}^n$ and $Z\in\mathbb{X}^n$ such that $Q+2\textrm{diag}(\lambda)-2Z\succeq 0$, and
\begin{equation}\label{eqn:McCormick}
	\begin{aligned}
		\mathcal{M}_{ij}:=\left\{ (X_{ij},x_i,x_j)\in\mathbb{R}^3 \left|\begin{array}{@{}llll}
			&X_{ij}\geq 0,X_{ij}-x_i-x_j+1\geq 0,\\
			&X_{ij}-x_i\leq 0,X_{ij}-x_j\leq 0,
		\end{array}\right.\right\}.
	\end{aligned}
\end{equation}
For any feasible solution $(x,X)$ of \eqref{QCRE}, it is easy to check that the equation $X_{ij}=x_i x_j$ $(\forall i\neq j)$ must hold under the constraints of the problem. 
Furthermore, under the condition that $Q+2\textrm{diag}(\lambda)-2Z\succeq 0$, the continuous relaxation of \eqref{QCRE} is convex. The parameters $\lambda^\ast$ and $Z^\ast$ that maximize the continuous relaxation bound of \eqref{QCRE} can be computed by solving the following problem \cite{Billionnet2012,Billionnet2013}:
\begin{equation}\label{SDPRLTDual}
	\begin{aligned}
		\max ~& \tau\\
		\mbox{s.t.} ~& \begin{bmatrix}
			-2 e^\top N e -2 \tau ~&~ (c -\lambda+2Ne -Re-Se)^\top \\
			c -\lambda+2Ne -Re-Se  ~&~ Q+2\textrm{diag}(\lambda)-2M-2N+2R+2S  \\
		\end{bmatrix}\succeq 0,\\
		~&\tau\in\mathbb{R},~ \lambda\in\mathbb{R}^n, ~ M,N,R,S\in\mathbb{X}^+_n,
	\end{aligned}
\end{equation}
and setting
\begin{equation}\label{eq_solution}
	\lambda^\ast=\lambda^\dag~\textrm{and}~Z^\ast= M^\dag +  N^\dag -  R^\dag -  S^\dag,
\end{equation}
where  $(\lambda^\dag,\tau^\dag,M^\dag,N^\dag,R^\dag,S^\dag)$ denotes the optimal solution of \eqref{SDPRLTDual}.
Note that the dual problem of \eqref{SDPRLTDual} is the well-known SDP+RLT relaxation \cite{Anstreicher}:
\begin{equation}\label{SDPRLT}\tag{SDP+RLT}
	\begin{aligned}
		\max ~&~ \frac{1}{2}Q\cdot X+c^\top x\\
		\mbox{s.t.} ~&X_{ii}=x_i,~i=1,\ldots,n\\
		&(X_{ij},x_i,x_j)\in \mathcal{M}_{ij},~i,j=1,\ldots,n,~i\neq j,\\
		&X\succeq xx^\top.
	\end{aligned}
\end{equation}
Thus, when using the best parameters $\lambda^\ast$ and $Z^\ast$ to derive \eqref{QCRE}, the continuous relaxation bound of \eqref{QCRE} is equal to the optimal value of \eqref{SDPRLT}.

As we can see, in both reformulations \eqref{QCR} and \eqref{QCRE}, the purpose is to derive some reformulations that yield high-quality convex continuous relaxation bounds, based on which the efficiency of a branch-and-bound algorithm implemented in a solver such as Gurobi can be improved. However, we remark that in modern solvers such as Gurobi, the continuous relaxation bound is not the only choice for a branch-and-bound algorithm. It is possible to derive nonconvex reformulations that provide computational efficient McCormick relaxation bounds tighter than the continuous relaxation bounds of \eqref{QCR} and \eqref{QCRE}, which may further speed up the solver.

\section{Quadratic nonconvex reformulations}\label{sec3}

When solving a mixed-integer quadratic programming problem with nonconvex quadratic constraints using a modern solver such as Gurobi, the lower bound method is based on the McCormick relaxation. We propose some new quadratic nonconvex reformulations of \eqref{BQP}, which provide high-quality McCormick relaxation bounds to the solver. Such new reformulations are more flexible on adding nonconvex quadratic constraints into the problem formulation, which can further enhance the lower bounds of the reformulations by exploiting valid quadratic inequalities. In this section, we first provide a brief introduction to the lower bound methods implemented in Gurobi. After introducing the lower bound methods, we propose several quadratic nonconvex reformulations for \eqref{BQP}.

\subsection{The lower bound methods implemented in Gurobi}\label{sec31}

In this subsection, we provide a brief introduction to the lower bound methods implemented in Gurobi.
Consider the following quadratically constrained quadratic programming problem with binary variables:
\begin{equation}\label{MIQP}\tag{MIQP}
	\begin{aligned}
		\min ~&~q_0(x) \\
		\mbox{s.t.} ~&~q_i(x)\leq b_i,~i=1,2,\ldots,m,\\
		~&~ x_i\in \{0,1\},~i=1,\ldots,n,
	\end{aligned}
\end{equation}
where $q_i(x):=\frac{1}{2}x^{T}Q_i x+c_i^{T}x$, $i=0,\ldots,n$.
When solving \eqref{MIQP} using Gurobi, the lower bound method is implemented as follows: First, the binary constraint $x_i\in \{0,1\}$ is relaxed to $0\leq x_i \leq 1$. Then, for each $i=0,\ldots,m$, the solver detects whether $q_i(x)$ is convex. If all these quadratic functions are convex, then the continuous relaxation of the problem is already convex, and the lower bound is computed by solving the continuous relaxation of the problem directly. Otherwise, for the cases that there exist some $i\in\{0,\ldots,m\}$ such that $q_i(x)$ is nonconvex, the nonconvex function is linearized to $\hat{q}_i(x,X):=\frac{1}{2}Q_i \cdot X+c_i^{T}x$, where $X:=xx^T$. Finally, the constraint $X:=xx^T$ is relaxed to $(X_{ij},x_i,x_j)\in\mathcal{M}_{ij}$ for each pair of $i\neq j$, together with the constraints $X_{ii}\geq x_i^2$ and $X_{ii} \leq x_i$ for  $i\in\{1,\ldots,n\}$. Since the inequalities defined in \eqref{eqn:McCormick} are just the McCormick inequalities derived in \cite{McCormick}, the relaxation  is called McCormick relaxation. Denote by $\mathcal{P}\subseteq\{1,\ldots,m\}$ the set of indices $i$ such that $Q_i\succeq 0$. The McCormick relaxation of \eqref{MIQP} is given as follows:

\begin{equation}\label{MCR}
	\begin{aligned}
		\min ~&~ \tilde{q}_0(x,X) \\
		\mbox{s.t.} ~&q_{i}(x)\leq b_i,~i\in\mathcal{P},\\
		&\hat{q}_{i}(x,X)\leq b_i,~i\in\{1,\ldots,m\}\setminus \mathcal{P},\\
		&x_i^2\leq X_{ii} \leq x_i,~i=1,\ldots,n,\\
		&(X_{ij},x_i,x_j)\in \mathcal{M}_{ij}, ~i\neq j, ~i,j=1,\ldots,n,\\
		&0\leq x_i\leq 1,~i=1,\ldots,n.
	\end{aligned}
\end{equation}
where $\tilde{q}_0(x,X)=q_0(x)$ if $Q_0\succeq 0$, and $\tilde{q}_0(x,X)=\hat{q}_0(x,X)$ otherwise. Note that the constraint $0\leq x_i\leq 1$ is redundant under $x_i^2\leq X_{ii} \leq x_i$ and can be dropped. The lower bound implemented in Gurobi is computed by solving \eqref{MCR}.

\begin{remark}
	The lower bound method introduced in this section is the default one in Gurobi. For more details on the lower bound methods implemented in Gurobi, we may refer to the document in the official website of Gurobi\footnote{{https://www.gurobi.com/events/non-convex-quadratic-optimization/}}.
\end{remark}

\begin{remark}\label{remark:gurobi}
	Two details in Gurobi should be explained. First, in the McCormick relaxation \eqref{MCR}, if all the nonzero coefficients for variables $X_{ij} ~(i\neq j)$ have the same sign, then the solver only introduce two inequalities rather than all the four inequalities to represent $(X_{ij},x_i,x_j)\in \mathcal{M}_{ij}$, since there must exist two redundant constraints that can be dropped, depends on the sign of these coefficients. Second, when \eqref{MIQP} contains equality constraint of the form $\frac{1}{2}x^{T}Q_i x+c_i^{T}x=b_i$, the constraint is also linearized to $\frac{1}{2}Q_i \cdot X+c_i^{T}x=b_i$.
\end{remark}

If \eqref{BQP} is reformulated as a mixed-integer quadratic programming problem with nonconvex quadratic constraints, then the McCormick relaxation is applied to handle the nonconvex constraints. The purpose of quadratic nonconvex reformulation is to derive nonconvex reformulations that have high quality McCormick relaxation bounds.

In the conventional quadratic convex reformulation technique, the reformulated problem must be a mixed-integer convex quadratic programming problem. This approach restricts the potential to develop more effective reformulations that yield tighter lower bounds. In contrast, quadratic nonconvex reformulation does not limit the reformulated problem to being a mixed-integer convex one. This technique thus offers greater flexibility in deriving a broader range of reformulations, enabling the provision of high-quality and computationally efficient lower bounds to the solver.


\subsection{A simple quadratic nonconvex reformulation for \eqref{BQP}}


Following the idea of quadratic nonconvex reformulation, we first derive a nonconvex reformulation as follows:
\begin{equation}\label{QNR}\tag{QNR}
	\begin{aligned}
		\min ~&~ \frac{1}{2}x^\top(Q+2\textrm{diag}(\lambda)-2Z) x+c^\top x -\lambda^\top x + w \\
		\textrm{s.t.}~&~ x_i \in \{0,1\},~i=1\ldots,n,\\
		~&~ w\geq x^\top Z x.
	\end{aligned}
\end{equation}
where $\lambda\in\mathbb{R}^n$ and $Z\in\mathbb{X}^n$, such that $Q+2\textrm{diag}(\lambda)-2Z\succeq 0$. Note that if $Z\neq 0$, then it is not positive semidefinite, since it has zero diagonal entries. Thus, based on the discussions in Section~\ref{sec31}, the constraint $w\geq x^\top Z x$ is relaxed to $w\geq Z\cdot X$, and the McCormick inequalities on $(X,x)$ are added.  When feeding problem formulation \eqref{QNR} into Gurobi, the lower bound for the root-node is computed by solving the following problem:
\begin{equation}\label{MCR-QNR}
	\begin{aligned}
		\min ~&~ \frac{1}{2}x^\top(Q+2\textrm{diag}(\lambda)-2Z) x+c^\top x -\lambda^\top x + w \\
		\textrm{s.t.}~&~ 0\leq x_i \leq 1,~i=1\ldots,n,\\
		&(X_{ij},x_i,x_j)\in \mathcal{M}_{ij}, ~i\neq j, ~i,j=1,\ldots,n,\\
		&~ w\geq  Z\cdot X.
	\end{aligned}
\end{equation}
We can check that \eqref{MCR-QNR} is equivalent to the continuous relaxation of \eqref{QCRE}. Thus, the best parameter $(\lambda^\ast,Z^\ast)$ of \eqref{QNR} can also be obtained by solving \eqref{SDPRLTDual} and computed by using \eqref{eq_solution}. When feeding \eqref{QNR} and \eqref{QCRE} into Gurobi, the two reformulations will have the same lower bound in their root-nodes if they are constructed using the same parameter $(\lambda^\ast,Z^\ast)$.

\begin{remark}\label{remark_qcre_qnr}
	We remark that although \eqref{QCRE} and \eqref{QNR} achieve the same lower bound in their root-nodes, the two reformulations are essentially different. The main difference is that some additional variables $X_{ij}$ for $i,j\in\{1,\ldots,n\}$ where $Z_{ij}\neq 0$ are explicitly introduced in the formulation of \eqref{QCRE}, while only one additional variable $w$ is introduced in \eqref{QNR}. When solving \eqref{QNR} using Gurobi, although the same set of additional variables $X_{ij}$ are potentially introduced for constructing the McCormick relaxation, these additional variables are dynamically maintained by the solver: As the branch-and-bound algorithm runs, some of the binary variables $x_i$ will be fixed to $0$ or $1$ and will be regarded as constants. Thus, if the value of one of the binary variable $x_i$ or $x_j$ is decided by the branching rule, the variable $X_{ij}$ is not introduced again. On the other hand, in \eqref{QCRE}, the set of additional variable are introduced statically, and are kept as the branch-and-bound algorithm runs. Thus, it is possible that \eqref{QNR} can be more effective than \eqref{QCRE}, since 
	the computational cost for handing a node in deep layer can be reduced by eliminating unnecessary variables.
\end{remark}

\subsection{Enhancing quadratic nonconvex reformulations using valid inequalities}

To derive a new reformulation that provides tighter lower bound than \eqref{SDPRLT}, we  exploit some valid quadratic inequalities to enhance the reformulation.

Assume that the following quadratic constraints are valid to the feasible domain of \eqref{BQP}:
\begin{equation}\label{gtx}
	g_t(x):=\frac{1}{2}x^\top A_t x+ c_t^\top x+b_t\leq 0,~t=1,\ldots,m.
\end{equation}
Our idea is to perturb the objective function using a new term $\sum_{t=1}^m \gamma_t (g_t(x)-v_t)$, where $\gamma_t\geq 0~(t=1,\ldots,m)$ are given parameters, and $v_t\in\mathbb{R}~(t=1,\ldots,m) $ are new variables that are constrained by $v_t= g_t(x)$. The perturbed objective function is given as
\begin{equation}
	\frac{1}{2}x^\top(Q+2\textrm{diag}(\lambda)-2Z)x+c^\top x-\lambda^\top x+w+\sum_{t=1}^m \gamma_t \left(g_t(x)-v_t\right),
\end{equation}
where $\lambda\in \mathbb{R}^n$, $Z\in\mathbb{X}^n$, and $\gamma\in\mathbb{R}^m_+$ are given parameters. We assume that $Q+2\textrm{diag}(\lambda)-2Z+\sum_{t=1}^m \gamma_t A_t\succeq 0$ holds. The reformulation is given as follows:
\begin{equation}\label{QNRE}\tag{QNRE}
	\begin{aligned}
		\min ~&\frac{1}{2}x^\top(Q+2\textrm{diag}(\lambda)-2Z)x+c^\top x-\lambda^\top x+w+\sum_{t=1}^m \gamma_t \left(g_t(x)-v_t\right)\\
		\textrm{s.t.}~&x_i \in \{0,1\},~i=1\ldots,n,\\
		&w\geq x^\top Z x,\\
		& v_t= g_t(x),~v_t\leq 0,~t=1,\ldots,m.
	\end{aligned}
\end{equation}
Note that since $g_t(x)\leq 0~(t=1,\ldots,m)$ holds for any $x\in \{0,1\}^n$, the constraint $v_t\leq 0$ is implied by $v_t= g_t(x)$ and is redundant. We emphasize that this redundant constraint should be kept in the reformulation, since it becomes effective on tightening the McCormick relaxation implemented in the solver.

Based on the discussions in Section~\ref{sec31}, when solving \eqref{QNRE} using Gurobi, the McCormick relaxation of the problem is given as follows:
\begin{equation}\label{QCRE-MCR}
	\begin{aligned}
		\min ~&\frac{1}{2}x^\top(Q+2\textrm{diag}(\lambda)-2Z)x+c^\top x-\lambda^\top x+w+\sum_{t=1}^m \gamma_t \left(g_t(x)-v_t\right)\\
		\textrm{s.t.}~ &w\geq Z\cdot X,\\
		&x_i^2\leq X_{ii} \leq x_i,~i=1,\ldots,n,\\
		& v_t= \frac{1}{2}A_t \cdot X+ c_t^\top x+b_t,~t=1,\ldots,m,\\
		&v_t\leq 0,~t=1,\ldots,m,\\
		&(X_{ij},x_i,x_j)\in \mathcal{M}_{ij}, ~i\neq j, ~i,j=1,\ldots,n.\\
	\end{aligned}
\end{equation}

The best parameter $(\lambda^\ast,Z^\ast,\gamma^\ast)$ of \eqref{QNRE} is defined as the one that maximizes the McCormick relaxation bound of the problem. We show that the best parameter can be computed by solving the following problem:
\begin{equation}\label{QNR-dual1}
	\begin{aligned}
		\max ~&~ \sigma \\
		\mbox{s.t.} ~&\begin{bmatrix}
			2\hat{b}-2\sigma &\hat{c}^T\\
			\hat{c} ~&\hat{Q} \\
		\end{bmatrix}\succeq 0,\\
		&\hat{Q}=Q+2\textrm{diag}(\lambda)+\sum_{t=1}^m\gamma_t A_t-2M-2N+2R+2S,\\
		&\hat{c}=c-\lambda+\sum_{t=1}^m \gamma_t c_t+2Ne-Re-Se,\\
		&\hat{b}=\sum_{t=1}^m \gamma_t b_t-\sum_{i=1}^n\sum_{j=1}^n N_{ij},\\
		&\sigma\in\mathbb{R},~\lambda\in\mathbb{R}^n,~\gamma\in\mathbb{R}^m_+,~M,N,R,S\in\mathbb{X}_+^n.
	\end{aligned}
\end{equation}
We provide the following theorem, whose proof is based on the idea of formulating the Lagrangian dual problem of \eqref{QCRE-MCR} as a semidefinite programming problem. We remark that similar proof techniques have been widely adopted in the literature of quadratic convex reformulations (for example, see Theorem 2 in \cite{Billionnet2012}, and Theorem 1 in \cite{Billionnet2013}, we only name a few).

\begin{theorem}\label{thm1}
	The parameter $(\lambda^\ast,Z^\ast,\gamma^\ast)$ that maximizes the McCormick relaxation bound of \eqref{QNRE} can be obtained by setting $\lambda^\ast=\lambda^\dag$, $\gamma^\ast=\gamma^\dag$, and
	\begin{equation}
		Z^\ast= M^\dag +  N^\dag -  R^\dag -  S^\dag,
	\end{equation}
	where $(\lambda^\dag,\gamma^\dag,\tau^\dag,M^\dag,N^\dag,R^\dag,S^\dag)$ denotes the optimal solution of \eqref{QNR-dual1}.
\end{theorem}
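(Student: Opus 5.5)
The plan is to compute the McCormick relaxation bound of \eqref{QNRE} in closed form, as a function of the parameters, by Lagrangian duality. Then maximize over the parameters and show that the resulting joint problem is exactly \eqref{QNR-dual1}.

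Fix $(\lambda,Z,\gamma)$ with $Q+2\textrm{diag}(\lambda)-2Z+\sum_t\gamma_tA_t\succeq 0$, and let $B(\lambda,Z,\gamma)$ be the optimal value of \eqref{QCRE-MCR}. The first step is to simplify \eqref{QCRE-MCR}.
\begin{itemize}
\item Since the coefficient of $w$ is $1$, we may set $w=Z\cdot X$ at optimality.
\item Substituting $v_t=\frac{1}{2}A_t\cdot X+c_t^\top x+b_t$ turns the objective term $\gamma_t(g_t(x)-v_t)$ into $\frac{\gamma_t}{2}A_t\cdot(xx^\top-X)$.
\item The constraint $v_t\le 0$ becomes the linear constraint $\frac{1}{2}A_t\cdot X+c_t^\top x+b_t\le 0$ in $(x,X)$.
\end{itemize}
So $B$ is the minimum of a convex quadratic in $x$, plus terms linear in $X$, over the polyhedron given by the McCormick sets $\mathcal{M}_{ij}$, the bounds $x_i^2\le X_{ii}\le x_i$, and the linearized valid inequalities.

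The second step is to dualize. Attach multipliers $M,N,R,S\in\mathbb{X}^n_+$ to the four McCormick inequalities $X_{ij}\le x_i$, $X_{ij}\ge x_i+x_j-1$, $X_{ij}\ge 0$, $X_{ij}\le x_j$ (the sign and role assignment is to be matched to the signs in \eqref{QNR-dual1}). Attach $\mu\ge 0$ to $v_t\le 0$, and handle the diagonal entries $X_{ii}$ similarly. Minimizing the Lagrangian over the free entries $X_{ij}$ forces the coefficient of each $X_{ij}$ to vanish, which gives a linear relation of the form $Z=M+N-R-S$ plus $\mu$-dependent terms. What remains is a quadratic in $x$, and its infimum is bounded below by $\sigma$ exactly when a $2\times 2$ block matrix is PSD (a Schur complement/S-lemma step). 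The multipliers $\mu_t$ then combine with the $\gamma_t$ that remain from the $xx^\top$ term, producing the aggregated coefficients $\hat Q,\hat c,\hat b$. Since \eqref{QCRE-MCR} is a convex program with linear constraints apart from $x_i^2\le X_{ii}$, and a Slater point exists, strong duality gives $B(\lambda,Z,\gamma)=\max\{\sigma:\cdots\}$ with $Z$ tied to $M,N,R,S$.

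The third step is to take the supremum over $(\lambda,Z,\gamma)$. Here $Z$ is determined by the multipliers, and $\gamma$ with $\mu$ collapse into a single nonnegative vector, call it $\gamma$ again. The result is \eqref{QNR-dual1}. Conversely, given an optimal solution of \eqref{QNR-dual1}, set $Z^\ast=M^\dag+N^\dag-R^\dag-S^\dag$, $\lambda^\ast=\lambda^\dag$, $\gamma^\ast=\gamma^\dag$. The multipliers $(M^\dag,N^\dag,R^\dag,S^\dag)$ with $\mu=0$ are then dual feasible for \eqref{QCRE-MCR} with value $\sigma^\dag$, so $B\ge\sigma^\dag$, which upper-bounds every achievable $B$. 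We also have to check that the convexity assumption holds: the lower-right block $\hat Q\succeq 0$ together with $M,N,R,S\ge0$ should give $Q+2\textrm{diag}(\lambda^\ast)-2Z^\ast+\sum_t\gamma^\ast_tA_t\succeq0$ after simplification.

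\textbf{Main obstacle.} The hardest step is the bookkeeping that shows the $\gamma$ multipliers for $v_t\le 0$ can be absorbed, so that optimal dual multipliers for the linearized valid inequalities are zero once $\gamma$ is chosen optimally. It also requires showing that $B(\lambda,Z,\gamma)$ never exceeds the value of \eqref{QNR-dual1}. I would argue this by showing that every feasible $(\lambda,Z,\gamma)$ together with its dual multipliers maps to a feasible point of \eqref{QNR-dual1}, with $\gamma+\mu$ in place of $\gamma$.
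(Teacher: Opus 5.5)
Your overall route is the paper's: dualize the McCormick relaxation, force the coefficient of $X$ to vanish, turn nonnegativity of the leftover quadratic into the block PSD condition, invoke strong duality, and then reparametrize to discard surplus multipliers. Eliminating $v_t$ beforehand is a harmless simplification of the paper's $\tau,\zeta$ bookkeeping. But the step you yourself call the main obstacle is done incorrectly, and as written both directions fail.

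\textbf{Forward direction.} After your substitution the objective contains $Z\cdot X-\frac{1}{2}\sum_t\gamma_tA_t\cdot X$. So stationarity in $X$ is
\[
Z-\frac{1}{2}\sum_{t}\gamma_tA_t+\frac{1}{2}\sum_{t}\mu_tA_t-\textrm{diag}(\alpha)+\textrm{diag}(\beta)-M-N+R+S=0,
\]
not ``$Z=M+N-R-S$ plus $\mu$-dependent terms.'' Substitute this into $Q+2\textrm{diag}(\lambda)-2Z+\sum_t\gamma_tA_t+2\textrm{diag}(\alpha)$. Then $\gamma$ cancels completely, and the leftover quadratic has
\[
\hat Q=Q+2\textrm{diag}(\lambda+\beta)+\sum_t\mu_tA_t-2M-2N+2R+2S,
\]
$\hat c=c-(\lambda+\beta)+\sum_t\mu_tc_t+2Ne-Re-Se$, and $\hat b=\sum_t\mu_tb_t-\sum_{i,j}N_{ij}$. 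The $c_t,b_t$ parts of $\gamma_t(g_t(x)-v_t)$ already cancelled in your first step. Hence the correct map into \eqref{QNR-dual1} is $\gamma\mapsto\mu$, $\lambda\mapsto\lambda+\beta$, $Z\mapsto M+N-R-S$. This is exactly the paper's $\gamma'=\gamma-\tau=\zeta$, $\lambda'=\lambda+\beta$. It is not ``$\gamma+\mu$ in place of $\gamma$,'' which does not give a feasible point of \eqref{QNR-dual1} in general. Note also that absorbing the multiplier $\beta$ of $X_{ii}\le x_i$ into $\lambda$ is a genuine part of the reparametrization, not something handled ``similarly.''

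\textbf{Converse direction.} Your certificate fails for the same reason. Take $Z^\ast=M^\dag+N^\dag-R^\dag-S^\dag$, $\gamma^\ast=\gamma^\dag$ and $\mu=0$. The coefficient of $X$ in the Lagrangian is then $-\frac{1}{2}\sum_t\gamma^\dag_tA_t$, which has nonzero off-diagonal entries whenever $\gamma^\dag\neq 0$ (e.g.\ for the triangle inequalities). So the Lagrangian is unbounded below and certifies nothing. You must take $\mu=\gamma^\dag$ and $\alpha=\beta=0$. Then the $X$-coefficient vanishes and the leftover quadratic is exactly the one in \eqref{QNR-dual1}, giving $B\ge\sigma^\dag$.

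Your heuristic that the optimal multipliers of the linearized valid inequalities vanish is backwards: those multipliers \emph{become} the new $\gamma$. If they were zero, the image point in \eqref{QNR-dual1} would have $\gamma=0$. You would then get at most the \eqref{SDPRLT} bound, and the constraints $v_t\le 0$ would contribute nothing. With these corrections your argument coincides with the paper's.
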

\begin{proof}
	First, we simplify \eqref{QCRE-MCR} as follows:
	\begin{equation}\label{QNRE-MCR}
		\begin{aligned}
			\min ~&~\frac{1}{2}x^\top(Q+2\textrm{diag}(\lambda)-2Z)x+c^\top x-\lambda^\top x+ Z\cdot X+\sum_{t=1}^m \gamma_t \left(g_t(x)-v_t\right)\\
			\textrm{s.t.}~ &x_i^2\leq X_{ii} \leq x_i,~i=1,\ldots,n,\\
			& v_t= \frac{1}{2}A_t \cdot X+ c_t^\top x+b_t,~t=1,\ldots,m\\
			&v_t\leq 0,~t=1,\ldots,m,\\
			&(X_{ij},x_i,x_j)\in \mathcal{M}_{ij}, ~i\neq j, ~i,j=1,\ldots,n.\\
		\end{aligned}
	\end{equation}	
	Define the Lagrangian function of \eqref{QNRE-MCR} as
	\begin{equation}
		\begin{aligned}
			&L(x,X,v,\alpha,\beta,\tau,\zeta,M,N,R,S)=\frac{1}{2}x^\top(Q+2\textrm{diag}(\lambda)-2Z)x+c^\top x-\lambda^\top x \\
			&\quad + Z\cdot X+\sum_{t=1}^m \gamma_t \left(\frac{1}{2} x^\top A_t x+c_t^\top x+b_t-v_t\right)+\sum_{i=1}^n\alpha_i(x_i^2-X_{ii})\\
			&\quad +\sum_{i=1}^n\beta_i(X_{ii}-x_i)+\sum_{t=1}^m \tau_t\left(  v_t-\frac{1}{2}A_t \cdot X- c_t^\top x-b_t \right)
			+\sum_{t=1}^m \zeta_t v_t\\
			&\quad +\sum_{i=1}^n\sum_{j=1}^n\left[ -M_{ij}X_{ij}-N_{ij}\left(X_{ij}-x_j-x_i+1\right)  \right]\\
			&\quad +\sum_{i=1}^n\sum_{j=1}^n\left[  R_{ij}(X_{ij}-x_i)+S_{ij}(X_{ij}-x_j) \right],
		\end{aligned}
	\end{equation}
	where $\alpha,\beta\in \mathbb{R}^n_+$, $\tau\in \mathbb{R}^m$ and $\zeta\in \mathbb{R}^m_+$ are the Lagrangian multipliers of the corresponding constraints in \eqref{QNRE-MCR}, and $M, N, R, S \in \mathbb{X}^n_+$, whose entries $M_{ij}, N_{ij}, R_{ij}, S_{ij}$ are the Lagrangian multipliers of the four McCormick inequalities appear in the constraint $\left(X_{ij},x_i,x_j\right)\in \mathcal{M}_{ij}$, respectively. Define the dual function of \eqref{QNRE-MCR} as
	\begin{equation}\label{dualfunction}
		p(\alpha,\beta,\tau,\zeta,M,N,R,S)=\min_{x\in\mathbb{R}^n,X\in\mathbb{S}^n,v\in\mathbb{R}^m}L(x,X,v,\alpha,\beta,\tau,\zeta,M,N,R,S),
	\end{equation}
	and define the associated dual problem of \eqref{QNRE-MCR} as
	\begin{equation}\label{QNR-dual}
		\begin{aligned}
			\max ~&p(\alpha,\beta,\tau,\zeta,M,N,R,S)\\
			\mbox{s.t.} ~~&\alpha,\beta\in \mathbb{R}^n_+,~\tau\in \mathbb{R}^m,~\zeta\in \mathbb{R}^m_+, ~M,N,R,S\in\mathbb{X}_+^n.
		\end{aligned}
	\end{equation}
	To avoid the unboundedness in minimizing the Lagrangian function in \eqref{dualfunction}, the coefficients for the matrix $X$ and vector $v$ in the Lagrangian function must be equal to zero. This leads to the following equations:
	\begin{equation}\label{eq_condition}
		\left\{
		\begin{aligned}
			&Z-\textrm{diag}(\alpha)+\textrm{diag}(\beta)-\sum_{t=1}^m\frac{1}{2} \tau_t A_t-M-N+R+S= 0,\\
			&-\gamma_t+\tau_t+\zeta_t=0,~t=1,\ldots,m.
		\end{aligned}\right.
	\end{equation}
	Using \eqref{eq_condition}, the Lagrangian function can be simplified to
	\begin{equation}
		L(x,X,v,\alpha,\beta,\tau,\zeta,M,N,R,S) = \frac{1}{2}x^T \hat{Q} x + \hat{c}^T x + \hat{b},
	\end{equation}
	where
	\begin{align}
		&\hat{Q}=Q+2\textrm{diag}(\lambda)-2Z+\sum_{t=1}^m \gamma_t A_t +2\textrm{diag}(\alpha),\\
		&\hat{c}=c-\lambda+\sum_{t=1}^m \gamma_t c_t-\beta- \sum_{t=1}^m \tau_t c_t +2Ne-Re-Se,\\
		&\hat{b}=\sum_{t=1}^m \gamma_t b_t-\sum_{t=1}^m \tau_t b_t-\sum_{i=1}^n\sum_{j=1}^n N_{ij}.
	\end{align}
	Based on the first equation in \eqref{eq_condition}, the expression for $\hat{Q}$ can equivalently be written as
	\begin{equation}
		\hat{Q}=Q+2\textrm{diag}(\lambda)+\sum_{t=1}^m\gamma_t A_t-\sum_{t=1}^m \tau_t A_t +2\textrm{diag}(\beta)-2M-2N+2R+2S.
	\end{equation}
	The dual problem can now be equivalently transformed to the following problem:
	\begin{equation}\label{QNR-dual2}
		\begin{aligned}
			\max ~&~ \sigma \\
			\mbox{s.t.} ~&~\frac{1}{2}x^T \hat{Q} x + \hat{c}^T x + \hat{b}-\sigma \geq 0,~\forall x\in\mathbb{R}^n,\\
			&\textrm{(16), (19--21)}\\
			&\sigma\in\mathbb{R},~\alpha\in \mathbb{R}^n_+,~\beta\in \mathbb{R}^n_+,~\tau\in \mathbb{R}^m,\zeta\in \mathbb{R}^m_+,~M,N,R,S\in\mathbb{X}_+^n.
		\end{aligned}
	\end{equation}
	Besides, the constraint $\frac{1}{2}x^T \hat{Q} x + \hat{c}^T x + \hat{b}-\sigma \geq 0~(\forall x\in\mathbb{R}^n)$ is equivalent to
	\begin{equation}\label{sdpconstraint}
		\begin{bmatrix}
			2\hat{b}-2\sigma &\hat{c}^T\\
			\hat{c} ~&\hat{Q} \\
		\end{bmatrix}\succeq 0
	\end{equation}
	Thus, the first constraint in \eqref{QNR-dual2} can be replaced by \eqref{sdpconstraint}.
	
	By strong duality, we have that the optimal value of \eqref{QNRE-MCR} is equal to the optimal value of \eqref{QNR-dual2}. Hence, the problem of maximizing the optimal value of \eqref{QNRE-MCR} is equivalent to the problem of maximizing the optimal value of \eqref{QNR-dual2}, which can be formulated as follows:
	\begin{equation}\label{QNR-dual3}
		\begin{aligned}
			\max ~&~ \sigma \\
			\mbox{s.t.} ~&\begin{bmatrix}
				2\hat{b}-2\sigma &\hat{c}^T\\
				\hat{c} ~&\hat{Q} \\
			\end{bmatrix}\succeq 0,\\
			&\hat{Q}=Q+2\textrm{diag}(\lambda)+\sum_{t=1}^m\gamma_t A_t-\sum_{t=1}^m \tau_t A_t +2\textrm{diag}(\beta)-2M-2N+2R+2S,\\
			&\hat{c}=c-\lambda+\sum_{t=1}^m \gamma_t c_t-\beta- \sum_{t=1}^m \tau_t c_t +2Ne-Re-Se,\\
			&\hat{b}=\sum_{t=1}^m \gamma_t b_t-\sum_{t=1}^m \tau_t b_t-\sum_{i=1}^n\sum_{j=1}^n N_{ij},\\
			&Z-\textrm{diag}(\alpha)+\textrm{diag}(\beta)-\sum_{t=1}^m\frac{1}{2} \tau_t A_t-M-N+R+S= 0,\\
			&-\gamma_t+\tau_t+\zeta_t=0,~t=1,\ldots,m,\\
			&Q+2\textrm{diag}(\lambda)-2Z+\sum_{t=1}^m \gamma_t A_t\succeq 0,\\
			&\sigma\in\mathbb{R},~\alpha\in \mathbb{R}^n_+,~\beta\in \mathbb{R}^n_+,~\tau\in \mathbb{R}^m,\zeta\in \mathbb{R}^m_+,~M,N,R,S\in\mathbb{X}_+^n,\\
			&\lambda\in\mathbb{R}^n,~\gamma\in\mathbb{R}^m_+,~Z\in\mathbb{X}^n.
		\end{aligned}
	\end{equation}
	
	From any feasible solution $(\sigma,\alpha,\beta,\tau,\zeta,M,N,R,S,\lambda,\gamma,Z)$ of \eqref{QNR-dual3}, we can construct another solution $(\sigma,\alpha^\prime,\beta^\prime,\tau^\prime,\zeta,M,N,R,S,\lambda^\prime,\gamma^\prime,Z^\prime)$ by setting $\alpha^\prime=0$, $\beta^\prime=0$, $\tau^\prime=0$, $\gamma^\prime=\gamma-\tau$, $\lambda^\prime=\lambda+\beta$, and $Z^\prime=M+N-R-S$. We can check that the constructed solution is also feasible to \eqref{QNR-dual3}, and has the same objective value as the original solution. Thus, we may fix the vectors $\alpha$, $\beta$, and $\tau$ to zero to show that \eqref{QNR-dual3} is equivalent to \eqref{QNR-dual1}, and the first equation in  \eqref{eq_condition} is reduced to $Z=M+N-R-S$.
\end{proof}

We further remark that the dual problem of \eqref{QNR-dual1} is
\begin{equation}\label{SDPRLTVI}\tag{SDP+RLT+VI}
	\begin{aligned}
		\max ~&~ \frac{1}{2}Q\cdot X+c^\top x\\
		\mbox{s.t.} ~&X_{ii}=x_i,~i=1,\ldots,n\\
		&(X_{ij},x_i,x_j)\in \mathcal{M}_{ij},~i,j=1,\ldots,n,~i\neq j,\\
		&\frac{1}{2}A_t\cdot X+c_t^\top x+b_t\leq 0,~t=1,\ldots,m,\\
		&X\succeq xx^\top.
	\end{aligned}
\end{equation}

Although \eqref{QNRE} provides a high quality McCormick relaxation bound, its main weakness is that the number of extended variables $\{v_t\}_{t=1}^m$ is generally very large, which significantly increases the computational cost of solving the McCormick relaxation. In order to reduce the number of extended variables in the McCormick relaxation, we simplify the formulation of \eqref{QNRE} as follow: First, we replace the constraint $v_t\leq 0$ by $g_t(x)\leq 0$. Second, we introduce a single variable $v\in\mathbb{R}$ to represent the aggregated value $\sum_{t=1}^n \gamma_t v_t$, and introduce the constraint $v=\sum_{t=1}^n \gamma_t g_t(x)$. Finally, the extended variables $\{v_t\}_{t=1}^m$ are dropped. The simplified formulation is given as follows:
\begin{equation}\label{QNREAGG}\tag{QNRE-AGG}
	\begin{aligned}
		\min ~&\frac{1}{2}x^\top(Q+2\textrm{diag}(\lambda)-2Z)x+c^\top x-\lambda^\top x+w+\sum_{t=1}^m \gamma_t g_t(x)-v\\
		\textrm{s.t.}~&x_i \in \{0,1\},~i=1\ldots,n,\\
		&w\geq x^\top Z x,\\
		&v\leq \sum_{t=1}^n \gamma_t g_t(x)\\
		&g_t(x)\leq 0,~t=1,\ldots,m.
	\end{aligned}
\end{equation}
Such reformulation aggregates the set of variables $\{v_t\}_{t=1}^m$ into just one variable, which can reduce the number of variables significantly when $m$ is huge. Furthermore, in \eqref{QNREAGG}, the constraint $v=\sum_{t=1}^n \gamma_t g_t(x)$ is relaxed to $v\leq \sum_{t=1}^n \gamma_t g_t(x)$. Such relaxation does not change the optimal value of the problem, but may further reduce the number of inequalities introduced in the McCormick relaxation of the problem, as discussed in Remark~\ref{remark:gurobi}.

Finally, we show that the McCormick relaxations of \eqref{QNREAGG} and \eqref{QNRE} are equivalent. We need the following assumption:

\begin{assumption}
Assume that $A_t\in\mathbb{X}^n$ holds for all $t=1,\ldots,m$, where $A_t$ denotes the matrix in the quadratic term of the function $g_t(x)$ defined in \eqref{gtx}.
\end{assumption}

Assumption 1 can be satisfied easily for a 0-1 integer programming problem, since all the quadratic terms $x_i^2$ in $g_t(x)$ can be replaced by the linear term $x_i$.

Under Assumption 1, we can show that $g_t(x)~(t=1,\ldots,m)$ is nonconvex if $A_t\neq 0$, and $\sum_{t=1}^m \gamma_t g_t(x)$ is nonconcave if $\sum_{t=1}^m \gamma_t A_t\neq 0$. Thus, all the quadratic constraints in \eqref{QNREAGG} are either nonconvex constraints or reduced to linear constraints. Based on the discussions in Section~\ref{sec31}, when feeding \eqref{QNREAGG} into Gurobi, all these quadratic constraints will be linearized (or are already linear ones) in the McCormick relaxation. It is easy to check that the McCormick relaxation of \eqref{QNREAGG} is as tight as the one of \eqref{QNRE}. Thus, \eqref{QNREAGG} provides the same lower bound as \eqref{QNRE}, but contains much fewer variables than \eqref{QNRE}.

It is interesting to see that the valid constraints $g_t(x)\leq 0 ~(t=1,\ldots,m)$ are presented in \eqref{QNREAGG} directly. Such constraints will be linearized by the solver, which can be effective for tightening the McCormick relaxation. This idea provides us a new insight into designing effective reformulations: Adding some redundant nonconvex quadratic constraints into the problem formulation directly can be effective on accelerating a solver. Such insight is counter-intuitive: In previous researches, for a mixed-integer programming problem, the valid inequalities added into a formulation are generally assumed to be convex. On the other hand, the nonconvex valid inequalities have only been applied previously for tightening the convex relaxations (such as semidefinite relaxations) of \eqref{BQP}, but have not been added into the problem formulation directly for the purpose of accelerating a solver.



Furthermore, for the cases where $m$ is huge, we only add the constraints $g_t(x)\leq 0$ where $\gamma^\ast_t>0$ into \eqref{QNREAGG} rather than all the valid constraints. Note that $\gamma_t^\ast$ denotes the parameter constructed in Theorem~\ref{thm1}. Thus, $\gamma^\ast_t>0$ means that $\frac{1}{2}A_t\cdot X+c_t^\top x+b_t\leq 0$ is active at the optimal solution of \eqref{SDPRLTVI}. Dropping inactive constraints in \eqref{SDPRLTVI} does not affect its optimal value.

\subsection{A specific reformulation by incorporating triangle inequalities}

In the literature, various valid inequalities have been proposed to tighten the semidefinite relaxation of a 0-1 integer programming problem. To illustrate the effectiveness of the proposed reformulation framework, we derive a specific quadratic nonconvex reformulation using the following inequalities named as (modified) triangle inequalities:
\begin{equation}\label{triangle}
	\left\{
	\begin{aligned}
		&x_ix_j + x_i x_k-x_j x_k-x_i\leq 0\\
		&x_ix_j + x_j x_k-x_i x_k-x_j\leq 0\\
		&x_ix_k + x_j x_k-x_i x_j-x_k\leq 0\\
		&x_i+x_j+x_k-x_i x_j -x_i x_k-x_jx_k-1\leq 0
	\end{aligned}\right.~\forall ~1\leq i<j<k\leq n.
\end{equation}
We remark that the original version of triangle inequalities, which are valid for $x\in\{-1,1\}^n$, are derived for the max-cut problem \cite{Helmberg,Rendl}. The inequalities presented in \eqref{triangle} are modified from the original triangle inequalities by variable substitutions and are valid for $x\in\{0,1\}^n$.


Based on Theorem~\ref{thm1}, using the set of triangle inequalities, we can derive a quadratic nonconvex reformulation in the form of \eqref{QNREAGG} whose McCormick relaxation bound can be as tight as the following problem:
\begin{equation}\label{SDP+RLT+TRI}\tag{SDP+RLT+TRI}
	\begin{aligned}
		\min ~&~ \frac{1}{2} Q\cdot X+c^\top x  \\
		\mbox{s.t.:} ~~&X_{ii}=x_{i},~i=1,\ldots,n,\\
		&(X_{ij},x_i,x_j)\in \mathcal{M}_{ij},~i,j=1,\ldots,n,~i\neq j,\\
		&(x,X)\in\mathcal{T},\\
		&X-xx^\top\succeq 0.
	\end{aligned}
\end{equation}
where $\mathcal{T}$ denotes the set of points $(x,X)$ satisfying the following constraints:
\begin{equation}\label{triangle2}
	\left\{
	\begin{aligned}
		&X_{ij} + X_{ik}-X_{jk}-x_i\leq 0\\
		&X_{ij} + X_{jk}-X_{ik}-x_j\leq 0\\
		&X_{ik} + X_{jk}-X_{ij}-x_k\leq 0\\
		&x_i+x_j+x_k-X_{ij} -X_{ik}-X_{jk}-1\leq 0
	\end{aligned}\right.~\forall ~1\leq i<j<k\leq n.
\end{equation}
In practical computations, \eqref{SDP+RLT+TRI} could be much tighter than both \eqref{SDPRLT} and \eqref{SDP}. 


\section{Numerical experiments}\label{sec4}

In this section, we evaluate the effectiveness of the proposed reformulations. We compare the proposed QNR based reformulations \eqref{QNR} and \eqref{QNREAGG} with the two QCR based reformulations \eqref{QCR} and \eqref{QCRE}.

Our numerical experiments are conducted on some public test instances downloaded from Biq Mac Library\footnote{https://biqmac.aau.at/biqmaclib.html}. We select 50 test instances with 100-150 variables and with density in the range of $30\%-100\%$. The numerical results based on these public test instances will be reported in Section~\ref{sec41}. In addition to the experiments on public test instances, in order to evaluate the performance of various reformulation methods on instances with different size and different densities, we further generate 60 test instances. The results on these randomly generated test instances will be reported in Section~\ref{sec42}.

To evaluate the effects of different reformulation methods, the following reformulations are compared:
\begin{enumerate}
	\item[$\bullet$] \textbf{QCR}: The quadratic convex reformulation \eqref{QCR}, whose parameters are determined by solving the dual problem of \eqref{SDP}.
	\item[$\bullet$] \textbf{QCRE}: The quadratic convex reformulation \eqref{QCRE}, whose parameters are determined by solving the dual problem of \eqref{SDPRLT}.
	\item[$\bullet$] \textbf{QNR}: The quadratic nonconvex reformulation \eqref{QNR}, whose parameters are determined by solving the dual problem of \eqref{SDPRLT}.
	\item[$\bullet$] \textbf{QNR+Tri}: The quadratic nonconvex reformulation \eqref{QNREAGG}, in which the set of valid inequalities are composed by all the triangle inequalities, and the parameters are determined by solving the dual problem of \eqref{SDP+RLT+TRI}.
\end{enumerate}
We solve all the reformulations using Gurobi, and investigate which reformulation is more effective in accelerating Gurobi by comparing the computational time.

All our experiments were conducted on a personal computer, with Intel i7-9700 CPU (3.00 GHz) and 16 GB RAM. Our codes were implemented in Matlab. All the semidefinite programming problems were solved using Mosek (ver 10.2.2), and all the reformulated problems were solved using Gurobi (ver 12.01). For fair comparisons, all tests used a single thread computation. The relative optimality tolerance of Gurobi was set to $10^{-4}$, the same as in \cite{Locatelli2024}. All other parameters in the solver were set to their default value.

When constructing a reformulation, a corresponding semidefinite programming problem is solved. Note that the two semidefinite programming problems \eqref{SDPRLT} and  \eqref{SDP+RLT+TRI} contain a large number of constraints. Similar to the method in \cite{QNR}, we solve the two problems using an iterative method described as follows:
\begin{itemize}
	\item[1.] Select \eqref{SDP} as the initial semidefinite relaxation problem and solve the initial problem for its optimal solution $(x^\ast,X^\ast)$. Set Iter$=0$.
	\item[2.] For the solution $(x^\ast,X^\ast)$, detect whether 
	the McCormick inequalities (and the triangle inequalities, together) are violated, and add these violated constraints into the semidefinite relaxation.
	\item[3.] Solve the updated problem again for its optimal solution. Update Iter$=$Iter$+1$.
	\item[4.] If the number of iterations does not reach the preset value, go to Step 2 to run the next iteration. Otherwise, we extract the dual solution of the current updated problem and construct the reformulation using the extracted dual solution.
\end{itemize}
This adaptive method saves a notable computational time while providing a high-quality near optimal solution of the semidefinite programming problems. The number of iterations is tuned to balance the trade-off between the quality of the bound and the computational time for solving the problem, which is set to 2 for \eqref{SDPRLT}, and is set to 9 for \eqref{SDP+RLT+TRI} in our experiments.

\subsection{Numerical results on public test instances from Biq Mac Library}\label{sec41}

We first study the performance of different reformulations on the test instances downloaded from Biq Mac Library. We select 50 instances. The first 10 instances, named as be100.$k$, where $k=1,\ldots,10$, are 100-dimensional test instances with full density. The other 40 instances are named as be$n$.$d$.$k$, where $n\in\{120,150\}$ represents the number of binary variables, $d\in\{3,8\}$ represents the density ($30\%$ and $80\%$, respectively), and $k\in\{1\ldots,10\}$ denotes the index of the instance. We remark that the 40 instances of dimensional 120-150 are also used in \cite{Locatelli2024} in their numerical experiments.

Then, we solve all these test instances using Gurobi, with four different reformulations. The time limit of Gurobi is set to 7200 seconds (which is also the setting in \cite{Locatelli2024}). For each instance, we record the time for preprocessing (i.e., the time of solving the semidefinite programming problem using Mosek) and the time for running branch-and-bound algorithm (i.e., the computational time recorded by Gurobi) separately. The number of nodes enumerated by the solver and the lower bound in the root-node are all recorded. Furthermore, if the instance is not solved within 7200 seconds, then we also record the final gap output by Gurobi.

The numerical results are presented in Table~\ref{tab1}. The column ``Node'' indicates the number of nodes, and the column ``Time'' indicates the computational time, which includes the total time of both preprocessing and running branch-and-bound algorithm. If an instance is not solved in the time limit, then we report the final relative gap in the column ``Node'', and mark ``7200'' in the column ``Time''.

In addition to Table~\ref{tab1}, more results are reported in Table~\ref{tab1b} to demonstrate the tightness of different reformulations and the computational cost for preprocessing. Actually, for each instance, we have checked that when using a specific reformulation method, the lower bound in the root-node (returned in the log file of Gurobi) is always equal to the lower bound of the corresponding semidefinite programming relaxation problem solved in the preprocessing step. Thus, we only list the relative gap (RG) of the semidefinite relaxation bounds, and the computational time for solving different semidefinite relaxations, including \eqref{SDP}, \eqref{SDPRLT}, and \eqref{SDP+RLT+TRI}. Note that the results of \eqref{SDPRLT} are shared by  both QCRE and QNR.

\begin{remark}
	The relative gap in Table~\ref{tab1b} is defined as $\textrm{RG}=(v^\ast-\ell)/\vert v^\ast \vert$, where $v^\ast$ denotes the optimal value of the problem, and $\ell$ denotes the lower bound returned by the semidefinite relaxation. This is different from the final gap returned by the solver, in which $v^\ast$ is defined as the upper bound (the objective value of the best-known solution, determined by the branch-and-bound algorithm) rather than the optimal value.
\end{remark}

\begin{table}[h!]
	\caption{Numerical results on Biq Mac test instances.}\label{tab1}
	\begin{tabular}{rrrrrrrrr}	
		\toprule
		\multirow{2}*{Instance} & \multicolumn{2}{c}{QCR} & \multicolumn{2}{c}{QCRE}&\multicolumn{2}{c}{QNR}&\multicolumn{2}{c}{QNR+TRI}\\
		\cmidrule{2-9}
		~ &Node & Time &Node & Time &Node & Time &Node & Time \\
		\midrule
		be100.1 &(4.09\%) &7200 &1066 &60 &1526 &52 &1 &40  \\
		be100.2 &(5.52\%) &7200 &2400 &102 &4421 &101 &1 &30  \\
		be100.3 &(5.40\%) &7200 &1104 &59 &1647 &45 &1 &36  \\
		be100.4 &(4.47\%) &7200 &3294 &157 &5258 &104 &63 &54  \\
		be100.5 &(8.02\%) &7200 &6390 &269 &10904 &132 &59 &58  \\
		be100.6 &(5.49\%) &7200 &2044 &100 &2133 &64 &1 &28  \\
		be100.7 &(6.69\%) &7200 &4519 &296 &6393 &440 &91 &69  \\
		be100.8 &(7.92\%) &7200 &28181 &1149 &36502 &347 &1 &21  \\
		be100.9 &(9.57\%) &7200 &20221 &562 &36017 &267 &291 &103  \\
		be100.10 &(8.31\%) &7200 &8511 &326 &10852 &111 &45 &87  \\
		be120.3.1 &1833 &197 &30361 &1635 &43998 &1427 &124 &75  \\
		be120.3.2 &973 &66 &2970 &226 &4751 &127 &1 &55  \\
		be120.3.3 &2183 &197 &11412 &437 &16373 &331 &1 &47  \\
		be120.3.4 &1409 &101 &8939 &535 &13379 &502 &1 &53  \\
		be120.3.5 &1400 &120 &5383 &322 &10057 &141 &1 &49  \\
		be120.3.6 &1218 &89 &2326 &158 &2435 &414 &1 &84  \\
		be120.3.7 &689 &16 &1253 &86 &1789 &70 &1 &58  \\
		be120.3.8 &1027 &58 &3163 &217 &5775 &315 &1 &58  \\
		be120.3.9 &17431 &2314 &52931 &1792 &91230 &876 &102 &85  \\
		be120.3.10 &2596 &264 &10703 &697 &16371 &264 &1 &48  \\
		be120.8.1 &(9.82\%) &7200 &(1.60\%) &7200 &537912 &6002 &6538 &1005  \\
		be120.8.2 &(8.48\%) &7200 &66995 &3838 &89197 &1810 &772 &280  \\
		be120.8.3 &(7.78\%) &7200 &35528 &2744 &49419 &672 &390 &141  \\
		be120.8.4 &(6.16\%) &7200 &6367 &538 &9845 &265 &30 &85  \\
		be120.8.5 &(5.85\%) &7200 &5337 &443 &10368 &243 &24 &94  \\
		be120.8.6 &(8.18\%) &7200 &45310 &3232 &53647 &767 &404 &220  \\
		be120.8.7 &(8.25\%) &7200 &(1.09\%) &7200 &272801 &2597 &2444 &723  \\
		be120.8.8 &(10.05\%) &7200 &(1.98\%) &7200 &(0.85\%) &7200 &16573 &1371  \\
		be120.8.9 &(9.23\%) &7200 &62874 &3308 &90931 &544 &738 &336  \\
		be120.8.10 &(6.69\%) &7200 &13592 &1055 &22802 &521 &229 &123  \\
		be150.3.1 &(6.76\%) &7200 &(0.65\%) &7200 &175736 &2233 &168 &163  \\
		be150.3.2 &(7.30\%) &7200 &(1.09\%) &7200 &301492 &4321 &366 &226  \\
		be150.3.3 &(5.98\%) &7200 &9912 &1045 &18633 &871 &1 &107  \\
		be150.3.4 &(5.55\%) &7200 &12561 &1197 &31397 &966 &1 &130  \\
		be150.3.5 &(7.96\%) &7200 &(1.05\%) &7200 &313252 &5126 &830 &304  \\
		be150.3.6 &(8.90\%) &7200 &(2.17\%) &7200 &(1.56\%) &7200 &15590 &1274  \\
		be150.3.7 &(7.92\%) &7200 &(0.88\%) &7200 &266032 &6842 &388 &219  \\
		be150.3.8 &(8.38\%) &7200 &(2.22\%) &7200 &(1.63\%) &7200 &30836 &3236  \\
		be150.3.9 &(13.01\%) &7200 &(4.32\%) &7200 &(3.72\%) &7200 &(0.75\%) &7200  \\
		be150.3.10 &(8.66\%) &7200 &(2.23\%) &7200 &(1.54\%) &7200 &26862 &2366  \\
		be150.8.1 &(9.13\%) &7200 &(1.91\%) &7200 &(0.88\%) &7200 &15377 &1577  \\
		be150.8.2 &(9.74\%) &7200 &(2.51\%) &7200 &(1.64\%) &7200 &(0.77\%) &7200  \\
		be150.8.3 &(7.69\%) &7200 &(1.45\%) &7200 &(1.02\%) &7200 &18023 &2162  \\
		be150.8.4 &(8.56\%) &7200 &(1.40\%) &7200 &(0.97\%) &7200 &5111 &983  \\
		be150.8.5 &(7.22\%) &7200 &(1.45\%) &7200 &416118 &5109 &5824 &1328  \\
		be150.8.6 &(9.75\%) &7200 &(1.90\%) &7200 &(1.25\%) &7200 &16801 &2464  \\
		be150.8.7 &(7.72\%) &7200 &(1.95\%) &7200 &(1.20\%) &7200 &45868 &5003  \\
		be150.8.8 &(7.75\%) &7200 &(1.55\%) &7200 &(1.06\%) &7200 &15050 &1770  \\
		be150.8.9 &(9.06\%) &7200 &(2.36\%) &7200 &(1.76\%) &7200 &(0.45\%) &7200  \\
		be150.8.10 &(8.10\%) &7200 &(1.56\%) &7200 &(1.02\%) &7200 &14709 &1381  \\
		\bottomrule
	\end{tabular}
\end{table}

\begin{table}[h!]
	\centering
	\caption{Root-node relative gaps on Biq Mac test instances.}\label{tab1b}
	\begin{tabular}{rrrrrrr}	
		\toprule
		\multirow{2}*{Instance} & \multicolumn{2}{c}{SDP} & \multicolumn{2}{c}{SDP+RLT} &\multicolumn{2}{c}{SDP+RLT+TRI}\\
		\cmidrule{2-3} \cmidrule{4-5} \cmidrule{6-7}
		~ &RG & Time &RG & Time &RG & Time \\
		\midrule
		be100.1 &5.31\% &0.1 &1.09\% &4.0 &0.00\% &33.1  \\
		be100.2 &6.24\% &0.0 &1.58\% &3.4 &0.00\% &22.4  \\
		be100.3 &6.62\% &0.1 &1.23\% &4.4 &0.00\% &30.2  \\
		be100.4 &5.24\% &0.1 &1.51\% &4.0 &0.08\% &26.3  \\
		be100.5 &9.00\% &0.1 &2.71\% &4.3 &0.16\% &18.4  \\
		be100.6 &6.72\% &0.1 &1.70\% &4.6 &0.00\% &23.0  \\
		be100.7 &7.91\% &0.1 &1.96\% &5.4 &0.08\% &24.1  \\
		be100.8 &8.95\% &0.1 &3.11\% &3.4 &0.63\% &20.1  \\
		be100.9 &10.77\% &0.1 &4.35\% &3.4 &0.65\% &17.8  \\
		be100.10 &9.49\% &0.0 &2.74\% &3.9 &0.10\% &17.7  \\
		be120.3.1 &8.25\% &0.1 &2.51\% &7.2 &0.17\% &43.5  \\
		be120.3.2 &6.58\% &0.1 &1.45\% &9.6 &0.00\% &50.6  \\
		be120.3.3 &6.61\% &0.1 &1.97\% &7.8 &0.00\% &41.1  \\
		be120.3.4 &6.43\% &0.1 &1.74\% &8.6 &0.00\% &44.0  \\
		be120.3.5 &7.58\% &0.1 &2.01\% &9.7 &0.00\% &45.1  \\
		be120.3.6 &6.38\% &0.1 &1.35\% &9.0 &0.00\% &77.5  \\
		be120.3.7 &5.14\% &0.1 &0.94\% &7.7 &0.00\% &54.8  \\
		be120.3.8 &5.13\% &0.1 &1.30\% &6.9 &0.00\% &50.8  \\
		be120.3.9 &10.32\% &0.1 &3.91\% &7.6 &0.22\% &30.7  \\
		be120.3.10 &8.41\% &0.1 &2.21\% &10.6 &0.00\% &38.8  \\
		be120.8.1 &10.51\% &0.1 &4.08\% &7.9 &1.24\% &33.8  \\
		be120.8.2 &9.07\% &0.1 &3.16\% &9.5 &0.79\% &34.1  \\
		be120.8.3 &8.46\% &0.1 &2.91\% &8.9 &0.54\% &36.6  \\
		be120.8.4 &6.95\% &0.1 &1.91\% &9.0 &0.02\% &42.1  \\
		be120.8.5 &6.47\% &0.1 &1.77\% &8.3 &0.02\% &45.0  \\
		be120.8.6 &8.97\% &0.1 &2.92\% &9.4 &0.50\% &34.5  \\
		be120.8.7 &8.99\% &0.1 &3.00\% &11.2 &0.91\% &45.3  \\
		be120.8.8 &10.81\% &0.1 &4.56\% &9.3 &1.57\% &33.6  \\
		be120.8.9 &9.65\% &0.1 &3.16\% &7.6 &0.59\% &35.2  \\
		be120.8.10 &7.27\% &0.1 &2.17\% &9.2 &0.32\% &43.3  \\
		be150.3.1 &6.81\% &0.1 &2.21\% &12.3 &0.15\% &98.3  \\
		be150.3.2 &7.79\% &0.1 &2.66\% &16.7 &0.28\% &102.1  \\
		be150.3.3 &6.37\% &0.1 &1.57\% &14.4 &0.00\% &99.5  \\
		be150.3.4 &5.87\% &0.1 &1.42\% &11.6 &0.01\% &101.4  \\
		be150.3.5 &8.29\% &0.1 &2.84\% &14.2 &0.47\% &92.6  \\
		be150.3.6 &9.36\% &0.1 &4.00\% &11.3 &1.20\% &68.5  \\
		be150.3.7 &8.34\% &0.1 &2.50\% &12.9 &0.34\% &84.4  \\
		be150.3.8 &9.03\% &0.1 &3.72\% &12.1 &1.32\% &83.6  \\
		be150.3.9 &13.41\% &0.1 &6.46\% &12.5 &2.34\% &64.6  \\
		be150.3.10 &8.72\% &0.1 &3.61\% &11.8 &1.25\% &89.6  \\
		be150.8.1 &9.53\% &0.2 &3.86\% &11.2 &1.26\% &76.2  \\
		be150.8.2 &9.99\% &0.2 &4.10\% &10.9 &1.67\% &63.2  \\
		be150.8.3 &7.72\% &0.1 &2.93\% &10.9 &1.04\% &83.3  \\
		be150.8.4 &9.31\% &0.1 &3.23\% &12.4 &0.87\% &76.8  \\
		be150.8.5 &7.64\% &0.1 &2.64\% &12.0 &0.78\% &92.0  \\
		be150.8.6 &9.92\% &0.1 &3.55\% &12.9 &1.16\% &76.6  \\
		be150.8.7 &7.81\% &0.1 &3.22\% &14.3 &1.33\% &90.1  \\
		be150.8.8 &8.16\% &0.1 &2.94\% &14.4 &0.99\% &89.8  \\
		be150.8.9 &9.64\% &0.1 &4.07\% &13.5 &1.50\% &79.9  \\
		be150.8.10 &8.15\% &0.1 &3.08\% &13.4 &0.98\% &93.2  \\
		\bottomrule
	\end{tabular}
\end{table}		

From the results in Table~\ref{tab1}, we can see that QNR+TRI performs better than the other three reformulations on almost all the instances (in terms of computational time, or the final gap if the solver fails to solve the instance). The main reason for the good performance of QNR+TRI is revealed in Table~\ref{tab1b}, which shows that the gap yielded by \eqref{SDP+RLT+TRI} is significantly smaller than the gaps of \eqref{SDP} and \eqref{SDPRLT}. Thus, using QNR+TRI, the number of nodes can be much fewer than that of the other three reformulations, due to the high quality lower bound. Although the preprocessing time of QNR+TRI is much longer than those of the other reformulations, the time saved by the branch-and-bound algorithm can be quite significant. Thus, the total time of QNR+TRI can be much shorter than those of the other three reformulations.

It is interesting to compare the performance of QNR and QCRE. Actually, the two reformulations share the same preprocessing step, and use the same parameters for constructing reformulations. However, their computational performances are quite different. From the results in Table~\ref{tab1}, we can see that QNR performs better than QCRE on most test instances. These results support our analysis in Remark~\ref{remark_qcre_qnr}: QCRE keeps all the extended variables in $X$ statically, but QNR keeps these extended variables dynamically. As the branch-and-bound algorithm runs, some variables in $x$ are fixed as constants. Then, QNR can dynamically eliminate some extended variables, so that the lower bound can be computed more efficiently than QCRE.

\begin{remark}\label{remark_qnr_qcre_root}
	We further checked the log file output by Gurobi to observe the performance of QCRE and QNR, and discover that QNR needs much longer time than QCRE for preprocessing the root-node. A possible reason is that Gurobi may need additional preprocessing time for generating the McCormick inequalities for the root-node. In comparison, since the continuous relaxation of QCRE is already convex, such preprocessing time is not needed. Although QNR needs longer time than QCRE for processing the root-node, it can be more efficient on handling nodes in deep layers.
\end{remark}

Furthermore, we can see that QCR fails to solve all the instances with high density (larger than or equal to $80\%$) or with more than 150 variables. However, it is very effective for the cases where $n=120$ and $d=3$. In these cases, QCR performs even better than both QNR and QCRE, although its lower bound in the root-node is not as tight as the bounds of the other two reformulations. Note that QCR only perturbs the diagonal entries of $Q$, which preserves the sparsity of the problem. Such sparsity may bring additional benefits for improving the performance of a branch-and-bound algorithm. In comparison, although QNR and QCRE provide better root-node bounds than QCR, they break the sparsity of the problem as the matrix $Z$ introduced in the reformulation is generally not sparse.

In conclusion, based on the results on instances from Biq Mac Library, we can see that QNR+TRI is the most effective method. QNR can be more effective than QCRE on improving the performance of the solver. QCR is efficient on sparse instances with no more than 120 variables.

Finally, we remark that the proposed reformulations are not compared with the tailored algorithms such as Biq Mac, BiqBin and FixingBB directly. However, ignoring the differences caused by the computing environment, we can roughly compare our results reported in Table~\ref{tab1} with the results in Table~6 in \cite{Locatelli2024}. We can see that for the 20 instances with $n=120$, the results of QNR+TRI are comparable with the results of BiqBin and FixingBB. However, for the instances where $n=150$, QNR+TRI is still not comparable with these tailored algorithms. Nonetheless, the proposed reformulation techniques can significantly reduce the gap between the performance of a general-purpose solver and the performance of a tailored solver.

\subsection{Numerical results on randomly generated test instances}\label{sec42}

The results in Section~\ref{sec41} are limited to the instances that have more than 100 variables. These results are unable to fully reflect the advantages of different reformulation methods on instances with various sizes and various densities. In order to compare the four reformulations on instances with a wider range of sizes and different densities, we carry out more experiments using additional test instances.

\begin{table}[!h]
	\caption{Numerical results on generated instances.}\label{tab2}
	{\footnotesize 
	\begin{tabular}{rrrrrrrrr}	
		\toprule
		\multirow{2}*{Instance} & \multicolumn{2}{c}{QCR} & \multicolumn{2}{c}{QCRE}&\multicolumn{2}{c}{QNR}&\multicolumn{2}{c}{QNR+TRI}\\
		\cmidrule{2-9}
		~ &Node & Time &Node & Time &Node & Time &Node & Time \\
		\midrule
		be60.3.1 &1174 &1 &202 &1 &186 &2 &1 &11  \\
		be60.3.2 &778 &0 &37 &1 &5 &13 &1 &21  \\
		be60.3.3 &1132 &0 &47 &1 &20 &7 &1 &27  \\
		be60.3.4 &718 &1 &444 &4 &409 &9 &1 &11  \\
		be60.3.5 &407 &0 &68 &1 &33 &13 &1 &20  \\
		be60.3.6 &1094 &0 &34 &1 &15 &20 &1 &20  \\
		be60.3.7 &1270 &0 &44 &1 &26 &6 &1 &19  \\
		be60.3.8 &1598 &1 &47 &2 &33 &8 &1 &19  \\
		be60.3.9 &572 &0 &54 &1 &3 &12 &1 &16  \\
		be60.3.10 &812 &2 &226 &4 &342 &8 &1 &10  \\
		be60.8.1 &1094559 &1974 &487 &5 &716 &5 &1 &11  \\
		be60.8.2 &767152 &1111 &254 &2 &262 &4 &1 &16  \\
		be60.8.3 &1440 &1 &100 &2 &94 &7 &1 &25  \\
		be60.8.4 &2487035 &4523 &196 &2 &186 &3 &1 &14  \\
		be60.8.5 &561640 &803 &239 &2 &287 &3 &1 &14  \\
		be60.8.6 &1788450 &3270 &177 &2 &186 &1 &1 &25  \\
		be60.8.7 &1068816 &1991 &245 &2 &233 &7 &1 &14  \\
		be60.8.8 &(0.42\%) &7200 &669 &6 &743 &2 &1 &15  \\
		be60.8.9 &176 &0 &1 &1 &1 &1 &1 &22  \\
		be60.8.10 &614234 &1054 &388 &4 &480 &17 &1 &12  \\
		be80.3.1 &721 &3 &652 &8 &709 &17 &1 &20  \\
		be80.3.2 &2055 &1 &83 &3 &36 &8 &1 &55  \\
		be80.3.3 &716 &2 &317 &8 &361 &77 &1 &44  \\
		be80.3.4 &677 &4 &525 &19 &650 &33 &1 &24  \\
		be80.3.5 &700 &2 &229 &6 &315 &21 &1 &25  \\
		be80.3.6 &917 &7 &1101 &19 &1721 &85 &1 &19  \\
		be80.3.7 &671 &2 &655 &9 &678 &20 &1 &22  \\
		be80.3.8 &671 &3 &535 &7 &585 &12 &1 &18  \\
		be80.3.9 &706 &2 &133 &5 &151 &20 &1 &47  \\
		be80.3.10 &694 &3 &763 &9 &1073 &8 &1 &23  \\
		be80.8.1 &(5.12\%) &7200 &1063 &18 &931 &29 &1 &19  \\
		be80.8.2 &(3.33\%) &7200 &277 &6 &318 &22 &1 &18  \\
		be80.8.3 &(6.12\%) &7200 &2329 &42 &2350 &27 &1 &18  \\
		be80.8.4 &(4.92\%) &7200 &542 &9 &657 &22 &1 &27  \\
		be80.8.5 &(5.84\%) &7200 &1005 &31 &966 &14 &1 &17  \\
		be80.8.6 &(12.22\%) &7200 &29046 &290 &38868 &145 &314 &115  \\
		be80.8.7 &1860 &1 &77 &3 &42 &196 &1 &53  \\
		be80.8.8 &(3.47\%) &7200 &184 &5 &166 &15 &1 &24  \\
		be80.8.9 &(9.11\%) &7200 &10018 &116 &13079 &73 &285 &75  \\
		be80.8.10 &(4.11\%) &7200 &217 &7 &324 &23 &1 &25  \\
		be100.3.1 &864 &21 &2623 &112 &2482 &17 &1 &36  \\
		be100.3.2 &726 &9 &928 &29 &942 &20 &1 &37  \\
		be100.3.3 &705 &6 &643 &20 &843 &92 &1 &51  \\
		be100.3.4 &1245 &41 &6006 &168 &8484 &189 &1 &25  \\
		be100.3.5 &666 &4 &498 &22 &559 &77 &1 &53  \\
		be100.3.6 &760 &2 &171 &8 &73 &36 &1 &113  \\
		be100.3.7 &1077 &23 &6179 &145 &7754 &105 &1 &29  \\
		be100.3.8 &741 &4 &160 &7 &173 &71 &1 &106  \\
		be100.3.9 &677 &8 &1786 &68 &1696 &110 &1 &31  \\
		be100.3.10 &887 &15 &4254 &103 &7675 &124 &1 &27  \\
		be100.8.1 &(6.57\%) &7200 &1985 &81 &2540 &74 &1 &40  \\
		be100.8.2 &(5.44\%) &7200 &1296 &54 &1219 &32 &1 &38  \\
		be100.8.3 &(5.87\%) &7200 &3889 &213 &4843 &176 &1 &47  \\
		be100.8.4 &(3.18\%) &7200 &451 &27 &588 &115 &1 &40  \\
		be100.8.5 &(7.07\%) &7200 &7848 &306 &12981 &285 &124 &87  \\
		be100.8.6 &(6.75\%) &7200 &3663 &138 &5501 &127 &24 &70  \\
		be100.8.7 &(13.14\%) &7200 &180525 &5390 &191181 &837 &2240 &576  \\
		be100.8.8 &(8.79\%) &7200 &25825 &1042 &31457 &679 &381 &120  \\
		be100.8.9 &(7.79\%) &7200 &29826 &1172 &33043 &257 &425 &165  \\
		be100.8.10 &(8.01\%) &7200 &32138 &845 &32503 &139 &291 &163  \\
		\bottomrule
	\end{tabular}
}
\end{table}

\begin{table}[h!]
	\centering
	\caption{Root-node relative gaps on generated instances.}\label{tab2b}
	{\footnotesize
	\begin{tabular}{rrrrrrr}	
		\toprule
		\multirow{2}*{Instance} & \multicolumn{2}{c}{SDP} & \multicolumn{2}{c}{SDP+RLT} &\multicolumn{2}{c}{SDP+RLT+TRI}\\
		\cmidrule{2-3} \cmidrule{4-5} \cmidrule{6-7}
		~ &RG & Time &RG & Time &RG & Time \\
		\midrule
		be60.3.1 &6.66\% &0.0 &1.26\% &0.4 &0.00\% &9.8  \\
		be60.3.2 &2.97\% &0.0 &0.01\% &0.5 &0.00\% &19.3  \\
		be60.3.3 &3.26\% &0.0 &0.11\% &0.6 &0.00\% &25.1  \\
		be60.3.4 &11.23\% &0.1 &2.13\% &0.7 &0.00\% &8.8  \\
		be60.3.5 &3.95\% &0.0 &0.29\% &0.6 &0.00\% &18.6  \\
		be60.3.6 &5.20\% &0.0 &0.07\% &0.6 &0.00\% &18.7  \\
		be60.3.7 &4.99\% &0.0 &0.09\% &0.6 &0.00\% &17.5  \\
		be60.3.8 &5.29\% &0.0 &0.28\% &0.7 &0.00\% &17.1  \\
		be60.3.9 &3.81\% &0.0 &0.03\% &0.6 &0.00\% &14.2  \\
		be60.3.10 &8.93\% &0.1 &1.69\% &1.0 &0.00\% &8.3  \\
		be60.8.1 &7.68\% &0.0 &1.98\% &0.8 &0.00\% &7.7  \\
		be60.8.2 &5.90\% &0.0 &1.26\% &0.5 &0.00\% &12.0  \\
		be60.8.3 &5.77\% &0.0 &0.67\% &0.6 &0.00\% &21.3  \\
		be60.8.4 &8.78\% &0.0 &1.56\% &0.7 &0.00\% &9.8  \\
		be60.8.5 &6.45\% &0.0 &1.01\% &0.5 &0.00\% &10.1  \\
		be60.8.6 &6.51\% &0.0 &1.03\% &0.5 &0.00\% &21.6  \\
		be60.8.7 &7.10\% &0.0 &1.18\% &0.5 &0.00\% &9.7  \\
		be60.8.8 &7.72\% &0.1 &2.33\% &0.5 &0.00\% &9.7  \\
		be60.8.9 &2.73\% &0.0 &0.00\% &0.6 &0.00\% &18.9  \\
		be60.8.10 &7.58\% &0.0 &1.68\% &0.5 &0.00\% &8.9  \\
		be80.3.1 &7.82\% &0.0 &1.62\% &1.5 &0.00\% &15.7  \\
		be80.3.2 &3.47\% &0.1 &0.22\% &1.3 &0.00\% &49.9  \\
		be80.3.3 &5.23\% &0.0 &0.71\% &1.8 &0.00\% &40.8  \\
		be80.3.4 &7.76\% &0.1 &1.43\% &2.2 &0.00\% &21.0  \\
		be80.3.5 &6.40\% &0.0 &1.13\% &2.0 &0.00\% &21.5  \\
		be80.3.6 &10.39\% &0.0 &2.52\% &1.8 &0.00\% &14.9  \\
		be80.3.7 &6.10\% &0.0 &1.19\% &2.0 &0.00\% &19.9  \\
		be80.3.8 &7.50\% &0.0 &1.52\% &1.3 &0.00\% &15.3  \\
		be80.3.9 &6.26\% &0.0 &0.68\% &2.0 &0.00\% &43.3  \\
		be80.3.10 &6.21\% &0.1 &1.26\% &1.2 &0.00\% &20.2  \\
		be80.8.1 &6.70\% &0.0 &1.57\% &1.5 &0.00\% &14.1  \\
		be80.8.2 &4.77\% &0.0 &0.73\% &1.0 &0.00\% &15.2  \\
		be80.8.3 &7.73\% &0.0 &2.11\% &1.4 &0.00\% &12.3  \\
		be80.8.4 &6.50\% &0.0 &1.38\% &1.7 &0.00\% &23.2  \\
		be80.8.5 &8.33\% &0.0 &1.80\% &2.9 &0.00\% &13.3  \\
		be80.8.6 &13.79\% &0.0 &5.96\% &1.4 &1.07\% &7.4  \\
		be80.8.7 &3.29\% &0.0 &0.15\% &1.6 &0.00\% &46.1  \\
		be80.8.8 &4.87\% &0.0 &0.62\% &1.1 &0.00\% &18.6  \\
		be80.8.9 &10.45\% &0.0 &4.13\% &1.1 &0.59\% &8.8  \\
		be80.8.10 &5.78\% &0.0 &0.94\% &1.3 &0.00\% &20.3  \\
		be100.3.1 &9.20\% &0.1 &1.81\% &4.7 &0.00\% &30.8  \\
		be100.3.2 &6.22\% &0.1 &1.12\% &3.9 &0.00\% &32.5  \\
		be100.3.3 &5.22\% &0.1 &0.99\% &3.5 &0.00\% &47.4  \\
		be100.3.4 &10.19\% &0.1 &2.92\% &4.5 &0.00\% &20.8  \\
		be100.3.5 &4.63\% &0.1 &0.76\% &4.6 &0.00\% &48.1  \\
		be100.3.6 &3.26\% &0.1 &0.33\% &3.7 &0.00\% &106.7  \\
		be100.3.7 &7.46\% &0.1 &2.48\% &3.4 &0.00\% &23.8  \\
		be100.3.8 &3.80\% &0.2 &0.33\% &3.5 &0.00\% &99.6  \\
		be100.3.9 &7.35\% &0.1 &1.70\% &5.6 &0.00\% &27.9  \\
		be100.3.10 &8.75\% &0.1 &2.25\% &3.9 &0.00\% &23.0  \\
		be100.8.1 &7.59\% &0.0 &1.62\% &3.4 &0.01\% &24.1  \\
		be100.8.2 &6.46\% &0.0 &1.48\% &3.1 &0.00\% &29.3  \\
		be100.8.3 &6.87\% &0.1 &1.79\% &4.9 &0.00\% &25.5  \\
		be100.8.4 &4.16\% &0.1 &0.67\% &4.5 &0.00\% &35.3  \\
		be100.8.5 &7.98\% &0.1 &2.65\% &3.0 &0.27\% &18.9  \\
		be100.8.6 &8.00\% &0.1 &2.27\% &4.1 &0.01\% &21.0  \\
		be100.8.7 &14.55\% &0.1 &5.59\% &5.3 &1.61\% &17.6  \\
		be100.8.8 &10.02\% &0.1 &3.65\% &5.0 &0.64\% &19.8  \\
		be100.8.9 &8.66\% &0.1 &3.12\% &4.3 &0.55\% &19.9  \\
		be100.8.10 &8.76\% &0.1 &3.23\% &2.8 &0.51\% &13.9  \\
		\bottomrule
	\end{tabular}
}
\end{table}		

These additional test instances are not selected from Biq Mac Library, since the remaining test instances are not suitable for systematically analyzing the impact caused by the number of variables and the impact of density. Thus, we generate more test instances with $n\in\{60,80,100\}$ and $d\in\{3,8\}$ (i.e., densities $30\%$ and $80\%$, respectively). The parameters $Q$ and $c$ in \eqref{BQP} are generated based on the distributions described in \cite{Pardalos1990}: The non-diagonal entries of $\frac{1}{2}Q$ are randomly sampled from $\{-50,\ldots,50\}$, the diagonal entries of $\frac{1}{2}Q$ are randomly sampled from $\{-100,\ldots,100\}$, and the entries of $c$ are set to zero. Note that the 50 test instances used in Section~\ref{sec41}, generated by Billionnet and Elloumi in \cite{Billionnet2007}, follows the same distributions described above. We solve all the instances using the four methods. The results are presented in Tables~\ref{tab2}~and~\ref{tab2b}, respectively. The columns in the two tables have the same meanings as those in Tables~\ref{tab1}~and~\ref{tab1b}, respectively.

From the results in  Tables~\ref{tab2}~and~\ref{tab2b}, we can see that QNR+TRI yields zero root-node gaps on most test instances, and solves these instances in the root-node directly. However, for all the cases of low density ($d=3$ and $n\in \{60,80,100\}$), or for the cases of small-size ($n=60$), QNR+TRI  generally needs longer total time than the other three reformulation methods, due to the long computational time for preprocessing. In some cases, the time for solving \eqref{SDP+RLT+TRI} is even longer than the total time of the other methods. However, for the instances that have at least 100 variables and have high density, QNR+TRI is the most competitive one.

For all low-density cases, QCR is the best choice, due to the same reason analyzed in Section~\ref{sec41}: QCR keeps the sparsity of the problem, which may bring additional benefits on accelerating the branch-and-bound algorithm. However, for all high-density cases, QCR performs worse than the other three methods, since the quality of lower bound becomes more important, while the benefit of exploiting the sparsity is marginal.

Finally, we observe that the comparison results between QCRE and QNR are mixed: For the cases where $n=100$ and $d=8$, QNR performs better than QCRE on most instances. On the other hand, for the other cases, QNR is slower than QCRE on most instances. As we have discussed in Remarks~\ref{remark_qcre_qnr}~and~\ref{remark_qnr_qcre_root}, QNR can be more efficient than QCRE when processing nodes in deep layers. However, for small-size problems, most nodes enumerated by the branch-and-bound algorithm are located in shallow layers. On the other hand, the log file of Gurobi indicates that QNR needs longer time than QCRE on handling the root-node. Thus, QNR is slower than QCRE on small-size instances, and becomes more effective on instances with medium to large sizes and with high densities.

In conclusion, for the test instances with up to 100 variables, QCR is the best choice if the density is low. For the cases that have high density, QNR+TRI is the most effective one when $n=100$. For the instances that have high density and few variables ($n\leq 80$), QCRE is the most effective method.

\section{Conclusions}

This paper proposes some new reformulations for the 0-1 quadratic programming problem based on the technique of quadratic nonconvex reformulation. Such a reformulation technique provides more flexibility on adding nonconvex quadratic constraints to the problem formulation, which can tighten the McCormick relaxation bounds of the problem effectively. We further propose a general framework for adding valid inequalities into the problem formulation, and analyze the impact on the reformulation to the McCormick relaxation bound. Based on the proposed framework, a specific relaxation that incorporates the triangle inequalities is proposed.

Our numerical results indicate that one of the proposed reformulations, QNR+TRI, outperforms the other reformulation methods on most instances with 120 or more variables, or with 100 variables and high densities. These results clearly demonstrate the effectiveness of the proposed reformulation technique, which can reduce the root-node gap significantly. On the other hand, for small size problems with 60-100 variables and different densities, there is no unique reformulation method that can achieve the best performance on all different cases. Our numerical results show that QCR is the most effective one on instances with low density and with up to 100 variables. On the other hand, for the instances that have high density and with up to 80 variables, QCRE performs best.

Although Gurobi is still not as efficient as some tailored algorithms such as BiqBin and FixingBB for solving 0-1 quadratic programming problems, the gap between the performance of a general-purpose solver and these tailored algorithms is reduced significantly by using the proposed reformulations. Such improvements bring new insights on how to design effective reformulations of mixed-integer quadratic programming problems to accelerate a solver. We believe that the proposed reformulation techniques can be further extended to more general cases of mixed-integer quadratic programming problems.

Finally, we remark that in addition to just adopting the triangle inequalities, it is possible to further improve QNR+TRI by using more valid quadratic inequalities. We leave this exploration to our future work.

\section*{Declarations}

Lu's research has been supported by the National Natural Science Foundation of China Grant No. 12171151. Deng's research has been supported by the National Natural Science Foundation of China Grant No. T2293774.

All authors certify that they have no affiliations with or involvement in any organization or entity with any financial interest or non-financial interest in the subject matter or materials discussed in this manuscript.

Data availability: Data will be made available on reasonable request.


\bibliography{BinQNR}

\end{document}